\documentclass[12pt,a4paper,reqno]{amsart}

\usepackage[lmargin=1.4 in, rmargin= 1.4in, tmargin=1.7in]{geometry}

\title{The category of necklaces is a test category}

\author{Arne Mertens}
\address[Arne Mertens]{Universiteit Antwerpen, Departement Wiskunde, Middelheimcampus,
Middelheimlaan 1, 2020 Antwerp, Belgium}
\email{arne.mertens@uantwerpen.be}

\usepackage{graphicx}
\usepackage[english]{babel}
\usepackage{amsmath}
\usepackage{amssymb}
\usepackage{amsthm}
\usepackage[all,cmtip]{xy}
\usepackage{cancel}
\usepackage[alpine]{ifsym}
\usepackage{enumerate}
\usepackage{stmaryrd}
\usepackage{tikz}
\usepackage{tikz-cd}
\usepackage{quiver}
\usetikzlibrary{matrix}
\usepackage[toc,page]{appendix}
\usepackage{hyperref}
\usepackage{mathtools}
\usepackage{float}
\usepackage{hhline}
\usepackage[style=alphabetic]{biblatex}
\DeclareMathOperator{\id}{id}

\DeclareMathOperator{\Fun}{Fun}

\DeclareMathOperator*{\colim}{colim}

\DeclareMathOperator{\Set}{Set}

\DeclareMathOperator{\SSet}{SSet}

\DeclareMathOperator{\Cat}{Cat}

\newcommand{\nec}{\mathcal{N}ec} 

\newtheorem{Thm}{Theorem}[section]
\newtheorem*{Thm*}{Theorem}

\newtheorem{Prop}[Thm]{Proposition}

\theoremstyle{definition}
\newtheorem{Def}[Thm]{Definition}

\theoremstyle{remark}

\begin{document}

\begin{abstract}
In this short note, we prove that the category of necklaces is a test category. Hence presheaves on necklaces model homotopy types. The proof is analogous to that for the category of cubes with connections.
\end{abstract}

\maketitle


\section{Introduction}

In this short note, we prove that the category $\nec$ of necklaces of \cite{dugger2011rigidification} is a test category in the sense of \cite{cisinski2006prefaisceaux}. We first recall some preliminaries on test categories and necklaces, and then prove our main result (Theorem \ref{theorem: nec is test category}).

\section{Preliminaries}

\subsection{Test categories}

We follow the terminology and conventions \cite{jardine2006categorical}. Be aware that these conventions sometimes differ from those of the original reference \cite{cisinski2006prefaisceaux}.

Let $\mathcal{A}$ be a fixed small category. We refer to presheaves $\mathcal{A}^{op}\to \Set$ as \emph{$\mathcal{A}$-sets}. Given $a\in \mathcal{A}$, let us denote the representable $\mathcal{A}$-set by $\hat{a} = \mathcal{A}(-,a)$.

Consider the functor
\begin{equation}\label{diagram: slice functor}
\iota: \mathcal{A}\rightarrow \Cat: a\mapsto \mathcal{A}/a
\end{equation}
Left Kan extension of $\iota$ along the Yoneda embedding $\mathcal{A}\hookrightarrow \Set^{\mathcal{A}^{op}}$ induces an adjunction
$$
\iota_{*} = \int_{\mathcal{A}}: \Set^{\mathcal{A}^{op}}\leftrightarrows \Cat: \iota^{*}
$$
For every $\mathcal{A}$-set $X$, $\int_{\mathcal{A}}X$ is precisely the category of elements of $X$. Given a small category $\mathcal{C}$, we have $\iota^{*}(\mathcal{C}) = \Fun(\mathcal{A}/-,\mathcal{C}): \mathcal{A}^{op}\to \Set$.

\begin{Def}
Let $N: \Cat\hookrightarrow \SSet$ denote the nerve functor.

A functor $F: \mathcal{C}\rightarrow \mathcal{D}$ between small categories is \emph{aspherical} or \emph{homotopy cofinal} if for all $d\in \mathcal{D}$, the simplicial set $N(F/d)$ is weakly contractible.  A small category $\mathcal{C}$ is \emph{aspherical} if the functor $\mathcal{C}\rightarrow *$ is aspherical, that is if $N(\mathcal{C})$ is weakly contractible.

An $\mathcal{A}$-set is called \emph{aspherical} if the forgetful functor $\int_{\mathcal{A}}X\to \mathcal{A}$ is aspherical. This is equivalent to the category $\int_{\mathcal{A}}(\hat{a}\times X)$ being aspherical for all $a\in \mathcal{A}$.
\end{Def}

\begin{Def}[Lemmas 2.3 and 2.4, \cite{jardine2006categorical}]
$\mathcal{A}$ is called a
\begin{enumerate}[1.]
\item \emph{local test category} if for every small category $\mathcal{D}$ with a terminal object, the $\mathcal{A}$-set $\iota^{*}(\mathcal{D})$ is aspherical.
\item \emph{test category} if it is a local test category and aspherical.
\end{enumerate}
\end{Def}

\subsection{Necklaces}

We follow the terminology and conventions of \cite[\S 3]{dugger2011rigidification} as well as \cite[\S 3.2]{mertens2026nerves}.

Let $\SSet_{*,*} = \SSet_{\partial\Delta^{1}/}$ denote the category of bipointed simplicial sets. Given two bipointed simplicial sets $K_{a,b}$ and $L_{c,d}$, we define their \emph{wedge sum} by the following coequalizer, which we consider to be bipointed at $a$ and $d$:
$$
\Delta^{0}\overset{b}{\underset{c}{\rightrightarrows}} K_{a,b}\amalg L_{c,d}\twoheadrightarrow K\vee L
$$

We always consider the standard simplices $\Delta^{n}$ to be bipointed at $0$ and $n$.

\begin{Def}
The category $\nec$ of \emph{necklaces} is the full subcategory of $\SSet_{*,*}$ spanned by all bipointed simplicial sets of the form
$$
T = \Delta^{n_{1}}\vee \dots\vee \Delta^{n_{k}}
$$
for $k\geq 0$ and each $n_{i} > 0$. The endpoints of each of the $\Delta^{n_{i}}$ in $T$ are called the \emph{joints} of $T$. We define the \emph{dimension} of $T$ as the number of non-joint vertices of $T$, and denote it $\dim(T)$.
\end{Def}

Under the wedge sum $\vee$, the category $\nec$ is monoidal with its unit given by $\Delta^{0}$. As a monoidal category, $\nec$ is generated by the following three classes of maps:
\begin{itemize}
\item the coface map $\delta_{j}: \Delta^{n-1}\hookrightarrow \Delta^{n}$ for $0 < j < n$,
\item the codegeneracy map $\sigma_{i}: \Delta^{n+1}\to \Delta^{n}$ for $0\leq i\leq n$, and
\item the inclusion $\nu_{k,l}: \Delta^{k}\vee \Delta^{l}\hookrightarrow \Delta^{k+l}$ which is bijective on vertices.
\end{itemize}

There is a strong monoidal functor
\begin{equation}\label{diagram: dimension functor}
\dim: \nec\to \Cat: T\mapsto [1]^{\dim(T)}
\end{equation}
where $[1] = \{0 < 1\}$ is the walking arrow category. The functor $\dim$ is defined on generating maps as follows:
\begin{itemize}
\item $\dim(\delta_{j}) = \delta^{\square}_{j,0}$
\item $\dim(\nu_{k,l}) = \delta^{\square}_{k,1}$
\item $\dim(\sigma_{i}) =
\begin{cases}
\sigma^{\square}_{1} & \text{if }i = 0\\
\gamma^{\square}_{i} & \text{if }0 < i < n\\
\sigma^{\square}_{n} & \text{if }i = n
\end{cases}$
if $n > 0$
\item $\dim(\sigma_{0}) = \id_{[0]}$ for $\sigma_{0}: \Delta^{1}\to \Delta^{0}$
\end{itemize}
where $\delta^{\square}_{i,\epsilon}$, $\sigma^{\square}_{i}$ and $\gamma^{\square}_{i}$ are maps of cubes, which are given by
\begin{align*}
\delta^{\square}_{i,\epsilon}&: [1]^{n-1}\rightarrow [1]^{n}: (\epsilon_{1},\dots,\epsilon_{n-1})\mapsto (\epsilon_{1},\dots,\epsilon_{i-1},\epsilon,\epsilon_{i},\dots,\epsilon_{n}) &&\text{for }1\leq i\leq n, \epsilon\in \{0,1\}\\
\sigma^{\square}_{i}&: [1]^{n+1}\rightarrow [1]^{n}: (\epsilon_{1},\dots,\epsilon_{n+1})\mapsto (\epsilon_{1},\dots,\epsilon_{i-1},\epsilon_{i+1},\dots,\epsilon_{n+1}) &&\text{for }1\leq i\leq n+1\\
\gamma^{\square}_{i}&: [1]^{n+1}\rightarrow [1]^{n}: (\epsilon_{1},\dots,\epsilon_{n+1})\mapsto (\epsilon_{1},\dots,\max(\epsilon_{i},\epsilon_{i+1}),\dots,\epsilon_{n}) &&\text{for }1\leq i\leq n-1
\end{align*}

\section{Proof}

We make use of the following general `yoga' as described in \cite{jardine2006categorical}.

\begin{Prop}[Remark 3.12, \cite{jardine2006categorical}]\label{proposition: amenable functor}
Let $i: \mathcal{A}\to \Cat$ be a functor and assume that
\begin{enumerate}[(a)]
\item for every $a\in \mathcal{A}$, $i(a)$ has a terminal object,
\item for every small category $\mathcal{D}$ with a terminal object, the $\mathcal{A}$-set $i^{*}(\mathcal{D})$ is aspherical, and
\item the category $\mathcal{A}$ is aspherical. 
\end{enumerate}
Then $\mathcal{A}$ is a test category.
\end{Prop}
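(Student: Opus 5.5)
The plan is to reduce to the definition of a test category through a comparison between the given functor $i$ and the canonical functor $\iota$ of (\ref{diagram: slice functor}). Hypothesis (c) already gives asphericity of $\mathcal{A}$. So everything comes down to showing that $\mathcal{A}$ is a local test category, i.e.\ that $\iota^{*}(\mathcal{D})$ is aspherical for every small $\mathcal{D}$ with a terminal object. We know this for $i^{*}(\mathcal{D})$ by (b), so the task is to transfer asphericity from $i^{*}$ to $\iota^{*}$.

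First I construct the comparison. For each $a$, let $t_{a}$ be the terminal object of $i(a)$, which exists by (a). Define
$$\varphi_{a}: \mathcal{A}/a\to i(a),\quad (f: b\to a)\mapsto i(f)(t_{b}).$$
A morphism $g$ over $a$ goes to the map induced by the unique arrow $i(g)(t_{b'})\to t_{b}$ inside $i(b)$. Uniqueness of maps into terminal objects makes $\varphi$ a natural transformation $\iota\Rightarrow i$, and it satisfies $\varphi_{a}(\id_{a}) = t_{a}$. Precomposition with $\varphi$ then gives a morphism of $\mathcal{A}$-sets
$$\varphi^{*}: i^{*}(\mathcal{D})\to \iota^{*}(\mathcal{D}),$$
natural in $\mathcal{D}$.

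Second, I exploit that both $\mathcal{A}$-sets are contractible in an ``interval'' sense. Apply (b) to $\mathcal{D} = [1]$. The $\mathcal{A}$-set $I = i^{*}([1])$ is aspherical. It has two global points, the constant functors at $0$ and at $1$, and these are disjoint because $i(a)$ is nonempty by (a). So $I$ is a separating aspherical interval. If $\mathcal{D}$ has terminal object $*$, the unique natural transformation $\id_{\mathcal{D}}\Rightarrow \mathrm{const}_{*}$ is a functor $\mathcal{D}\times[1]\to\mathcal{D}$. Applying $\iota^{*}$ (a right adjoint, hence product preserving) and composing with $\varphi^{*}:I\to\iota^{*}([1])$ yields an $I$-homotopy $\iota^{*}(\mathcal{D})\times I\to\iota^{*}(\mathcal{D})$ from the identity to the constant map at a point. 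The same construction applied to $i^{*}(\mathcal{D})$ gives an $I$-homotopy there as well.

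Third, I conclude using the standard fact from \cite{jardine2006categorical} (after Grothendieck and Cisinski) that an $\mathcal{A}$-set which is $I$-homotopy equivalent to the terminal $\mathcal{A}$-set, for an aspherical interval $I$, is itself aspherical. The functor $\int_{\mathcal{A}}$ sends projections $X\times I\to X$ to weak equivalences over $\mathcal{A}$ when $I$ is aspherical, so $I$-homotopies become homotopies after taking nerves. Applying this to $\iota^{*}(\mathcal{D})$ shows it is aspherical, so $\mathcal{A}$ is a local test category; together with (c), $\mathcal{A}$ is a test category.

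The main obstacle is this last step. One has to check that asphericity of $I$ alone, rather than local asphericity of all the products $\hat{a}\times I$, makes $X\times I\to X$ aspherical for every $X$. I would first handle the representables $X = \hat{a}$: there $\int_{\mathcal{A}}(\hat{a}\times I)$ is the category of elements of $I$ restricted along $\mathcal{A}/a\to\mathcal{A}$, and $i^{*}([1])$ restricted to $\mathcal{A}/a$ is again of the form $i'^{*}([1])$ for $i'$ the composite $\mathcal{A}/a\to\mathcal{A}\to\Cat$. This functor still satisfies (a) and (b), so (b) itself provides the needed asphericity. I would then extend to general $X$ by writing $X$ as a colimit of representables and using that $\int_{\mathcal{A}}$ preserves colimits and homotopy colimits of categories.
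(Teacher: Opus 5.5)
Your argument is correct, but the comparison is lopsided: the paper does not prove this proposition at all. It imports it from Jardine (Remark 3.12) as a black box, so that only condition (b) for $\dim$ has to be checked by hand.

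You instead give a self-contained proof, in four steps:
\begin{enumerate}[1.]
\item the comparison $\varphi:\iota\Rightarrow i$ built from the terminal objects;
\item the interval $I=i^{*}([1])$, which is aspherical by (b);
\item an $I$-contraction of $\iota^{*}(\mathcal{D})$, obtained by transporting the contraction $\mathcal{D}\times[1]\to\mathcal{D}$ along $\varphi^{*}:I\to\iota^{*}([1])$;
\item invariance of $\int_{\mathcal{A}}$ under $I$-homotopies.
\end{enumerate}
This is essentially the standard proof of one half of Grothendieck's criterion: a locally aspherical separating interval makes $\mathcal{A}$ a local test category. Citing that criterion would let you stop after your second step, and that is the only place where separation matters. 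Your direct use of $\varphi^{*}$ makes separation unnecessary. The paper's route buys brevity; yours buys independence from the literature, and it shows that (c) is used only for global asphericity.

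Two points in your last paragraph need tightening.

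First, the obstacle you describe is not there. With the paper's definition, an $\mathcal{A}$-set $X$ is aspherical exactly when $\int_{\mathcal{A}}(\hat{a}\times X)$ is aspherical for every $a$. So (b) for $\mathcal{D}=[1]$ already says that $\int_{\mathcal{A}}(\hat{a}\times I)$ is aspherical, and the detour through $i':\mathcal{A}/a\to\mathcal{A}\to\Cat$ is redundant. Under the weaker, non-local reading the detour would even be circular, since checking (b) for $i'$ requires precisely the local statement for $i$.

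Second, the justification that $\int_{\mathcal{A}}$ preserves homotopy colimits is not valid as stated: colimits in $\Cat$ are not homotopy invariant. The clean argument is Quillen's Theorem A. For $p:\int_{\mathcal{A}}(Y\times I)\to\int_{\mathcal{A}}Y$, the comma category $p/(a,y)$ is isomorphic to $\int_{\mathcal{A}}(\hat{a}\times I)$, hence aspherical. So $N(p)$ is a weak equivalence for every $\mathcal{A}$-set $Y$, with no colimit argument needed.

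Since the two endpoint inclusions are both sections of $p$, $I$-homotopic maps become equal in the homotopy category. Now take $Y=\hat{a}\times\iota^{*}(\mathcal{D})$, which is $I$-homotopy equivalent to $\hat{a}$. Then $N\int_{\mathcal{A}}(\hat{a}\times\iota^{*}(\mathcal{D}))$ is a homotopy retract of $N(\mathcal{A}/a)\simeq *$. That is exactly the local asphericity you need.
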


This allows to essentially replace the functor $\iota: \mathcal{A}\to \Cat$ \eqref{diagram: slice functor} by another which may be easier to describe. In the case of necklaces, we can consider the dimension functor $\dim: \nec\to \Cat$ \eqref{diagram: dimension functor}. It induces an adjunction
$$
\dim_{*}: \Set^{\nec^{op}}\leftrightarrows \Cat: \dim^{*}
$$
where $\dim_{*}(X) = \colim_{T\in \nec}^{X_{T}}[1]^{\dim(T)}$ and $\dim^{*}(\mathcal{C}) = \Fun([1]^{\dim(-)},\mathcal{C})$.

%

The next proof follows essentially the same strategy as for the category of cubes with connections, presented in \cite[Lemmas 3.10 and 3.11]{jardine2006categorical}.

\begin{Thm}\label{theorem: nec is test category}
$\nec$ is a test category.
\end{Thm}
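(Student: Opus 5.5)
We apply Proposition \ref{proposition: amenable functor} to the functor $i = \dim: \nec\to \Cat$ of \eqref{diagram: dimension functor}, and check conditions (a), (b) and (c) in turn.

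Condition (a) is immediate. Each $[1]^{\dim(T)}$ is a product of copies of $[1]$, so it has the terminal object $(1,\dots,1)$. This includes $[1]^{0} = [0]$.

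Condition (c) is also easy. The category $\nec$ has a terminal object, namely $\Delta^{0}$. Indeed, a bipointed map $T\to \Delta^{0}$ exists and is unique because $\Delta^{0}$ is terminal in $\SSet$ and both basepoints of $\Delta^{0}$ coincide. (If one prefers to avoid this, the initial object $\Delta^{0}$ as monoidal unit works just as well.) A category with a terminal object has contractible nerve, so $\nec$ is aspherical.

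The main work is condition (b). Fix a small category $\mathcal{D}$ with terminal object $t$. We must show that for every necklace $T$ the category of elements $\int_{\nec}(\hat{T}\times \dim^{*}\mathcal{D})$ is aspherical. Following \cite[Lemmas 3.10 and 3.11]{jardine2006categorical}, the plan is to contract $X = \dim^{*}\mathcal{D}$ onto the constant functor at $t$ by a zig-zag of homotopies defined on the level of $\nec$-sets. Consider the $\nec$-set $\dim^{*}[1] = \Fun([1]^{\dim(-)},[1])$; it plays the role of an interval. We then construct a map of $\nec$-sets
\[
h: \dim^{*}\mathcal{D}\times \dim^{*}[1]\to \dim^{*}\mathcal{D}.
\]
It sends a pair $(f: [1]^{n}\to \mathcal{D},\ u: [1]^{n}\to [1])$ to the functor $x\mapsto f(x)$ if $u(x)=0$ and $x\mapsto t$ if $u(x) = 1$. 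On arrows it uses the unique maps to $t$. This is a functor because $u$ is monotone and $t$ is terminal, and it is natural in $T$ because $\dim^{*}$ is precomposition. Restricting $h$ along the two global points $0,1$ of $\dim^{*}[1]$ gives the identity and the constant map at $t$, respectively.

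By the standard lemma (\cite{jardine2006categorical}), an $\nec$-set is aspherical once such a homotopy links it to an aspherical one, provided the interval itself is aspherical. The constant point $\{t\}$ is the terminal $\nec$-set, which is aspherical because $\hat{T}$ has aspherical category of elements (it has a terminal object). This reduces (b) to showing that $\dim^{*}[1]$ is aspherical.

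For that final step, $\int_{\nec}(\hat{T}\times \dim^{*}[1])$ has objects triples $(S, g: S\to T, u: [1]^{\dim S}\to [1])$. I would retract this category onto its full subcategory where $u$ is constant at $1$, using the natural transformation $u\le 1$. That subcategory is isomorphic to $\nec/T$, which has a terminal object. Making this retraction precise is the expected obstacle: the comparison $u\Rightarrow 1$ lives in $[1]$, not as a morphism of $\nec$. So I would instead exhibit $\dim^{*}[1]\times\dim^{*}[1]\to \dim^{*}[1]$, $(u,v)\mapsto \max(u,v)$, as a homotopy to the constant $1$, again via a zig-zag. This needs care at the point where the interval's own asphericity is used circularly. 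To break the circularity, I would verify directly that $\int_{\nec}(\hat{T}\times\dim^{*}[1])$ is connected with contractible nerve, by building an explicit $\nec$-morphism from each element to one with $u$ constant. The connections $\gamma^{\square}_{i}$ in the image of the codegeneracies (the $\max$ maps) are exactly what supply such morphisms, as in the cubical-with-connections case.
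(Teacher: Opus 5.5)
\paragraph{Where the proof breaks.} Your checks of (a) and (c) are fine. The parenthetical, however, is false: $\Delta^0$ is not initial in $\nec$, since a bipointed map $\Delta^0\to T$ forces the two endpoints of $T$ to coincide.

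The real problem is (b). Your interval argument is a valid reduction, but it reduces (b) to the statement that $\int_{\nec}(\hat T\times\dim^*[1])$ is aspherical for every $T$. You need exactly this local form so that the projection $\hat T\times X\times\dim^*[1]\to\hat T\times X$ induces a weak equivalence on categories of elements. But this is literally condition (b) for $\mathcal{D}=[1]$. So the whole content of the theorem is pushed into your last step, which you do not carry out.

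The mechanism you sketch for that step also cannot work. A morphism $(S',g',u')\to(S,g,u)$ in the category of elements is a necklace map $\phi$ with $u'=u\circ\dim(\phi)$. Hence there is no morphism from an element with nonconstant $u$ to one with constant $u$. Likewise, precomposing with images of codegeneracies ($\sigma^\square_i$ and $\gamma^\square_i$, all surjective) never makes a nonconstant $u$ constant. Morphisms into $(S,g,u)$ from constant elements do exist, but they come from face-type maps (inner cofaces and wedge inclusions), not connections. Choosing one per object gives at best connectedness. Contractibility requires such choices to assemble into natural transformations between endofunctors, and you provide none.

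\paragraph{The missing idea.} What is needed is a functorial cylinder inside $\nec$ itself. For $U$ with $\dim(U)=d$, the necklace $\Delta^2\vee U$ has dimension $d+1$. The functor $\dim$ sends $\delta_1\vee U:\Delta^1\vee U\to\Delta^2\vee U$ and $\nu_{1,1}\vee U:\Delta^1\vee\Delta^1\vee U\to\Delta^2\vee U$ to the two opposite faces $\delta^\square_{1,0}$ and $\delta^\square_{1,1}$. Meanwhile $\sigma_0\vee U:\Delta^1\vee U\to U$ and $\sigma_0\vee\sigma_0\vee U:\Delta^1\vee\Delta^1\vee U\to U$ have $\dim$ equal to the identity.

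Extend $F:[1]^d\to\mathcal{D}$ to $F^*:[1]^{d+1}\to\mathcal{D}$ by taking $F$ on the $0$-face and $t$ on the $1$-face. This gives endofunctors $H,I,J,K$ of $\int_{\nec}(\dim^*\mathcal{D}\times\hat T)$, sending $(F,f)$ to $(F,f(\sigma_0\vee U))$, $(F^*,f(\sigma_0\sigma_0\vee U))$, $(t,f(\sigma_0\vee\sigma_0\vee U))$ and $(t,f)$ respectively. These are linked by natural transformations $\id\leftarrow H\to I\leftarrow J\to K$. Now $K$ factors as $\beta\alpha$ through $\nec/T$ with $\alpha\beta=\id$, and $\nec/T$ has a terminal object, so asphericity follows.

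This handles every $\mathcal{D}$ at once, so no auxiliary interval is needed and no circularity arises. The feature specific to necklaces is that the $0$-face comes from an inner coface and the $1$-face from a wedge inclusion, not from connections.
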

\begin{proof}
We prove the statement by verifying conditions $(a)$-$(c)$ of Proposition \ref{proposition: amenable functor} for the functor $\dim: \nec\to \Cat$. First note that for every $T\in \nec$, the category $[1]^{\dim(T)}$ indeed has a terminal object. The category $\nec$ also has a terminal object $\Delta^{0}$, whereby it is aspherical. Hence $(a)$ and $(c)$ are satisfied.

To show $(b)$, let $\mathcal{D}$ be a small category with a terminal object $t$. We must show that for any necklace $T$, the category $\mathcal{D}_{T} = \int_{\nec}(\dim^{*}(\mathcal{D})\times \hat{T})$ is aspherical. Take an arbitrary object of $\mathcal{D}_{T}$, given by a pair $(F,f)$ with $f: U\to T$ a necklace map and $F: [1]^{\dim(U)}\rightarrow \mathcal{D}$ a functor. Further let $d = \dim(U)$ and define $f^{*} = f(\sigma_{0}\sigma_{0}\vee \id_{U}): \Delta^{2}\vee U\to U\to T$ and
$$
F^{*}: [1]^{d+1}\to \mathcal{D}: (\epsilon_{0},\dots,\epsilon_{d+1})\mapsto 
\begin{cases}
F(\epsilon_{0},\dots,\epsilon_{d}) & \text{if }\epsilon_{d+1} = 0\\
t & \text{if }\epsilon_{d+1} = 1
\end{cases}
$$
Then we have the following commutative diagrams in $\Cat$ and $\nec$ respectively:
\[\begin{tikzcd}
	{[1]^{d}} & {[1]^{d}} & {[1]^{d+1}} & {[1]^{d}} & {[1]^{d}} \\
	& & {\mathcal{D}}
	\arrow["{=}"', from=1-2, to=1-1]
	\arrow["{F}"', from=1-1, to=2-3]
	\arrow["{\delta^{\square}_{1,0}}", from=1-2, to=1-3]
	\arrow["{=}", from=1-4, to=1-5]
	\arrow["{\delta^{\square}_{1,1}}"', from=1-4, to=1-3]
	\arrow["F"{description}, from=1-2, to=2-3]
	\arrow["{F^{*}}"{description}, from=1-3, to=2-3]
	\arrow["t"{description}, from=1-4, to=2-3]
	\arrow["t", from=1-5, to=2-3]
\end{tikzcd}\]
\[\begin{tikzcd}
	U & \Delta^{1}\vee U & {\Delta^{2}\vee U} & {\Delta^{1}\vee \Delta^{1}\vee U} & U \\
	& & {T}
	\arrow["\sigma_{0}\vee U"', from=1-2, to=1-1]
	\arrow["f"', from=1-1, to=2-3]
	\arrow["{\delta_{1}\vee U}", from=1-2, to=1-3]
	\arrow["{\sigma_{0}\vee \sigma_{0}\vee U}", from=1-4, to=1-5]
	\arrow["{\nu_{1,1}\vee U}"', from=1-4, to=1-3]
	\arrow["f", from=1-5, to=2-3]
	\arrow[from=1-4, to=2-3]
	\arrow[from=1-2, to=2-3]
	\arrow["f^{*}"{description}, from=1-3, to=2-3]
\end{tikzcd}\]
Then we have induced endofunctors on $\mathcal{D}_{T}$:
\begin{align*}
& H: (F,f)\mapsto (F,f(\sigma_{0}\vee \id_{U})) && I: (F,f)\mapsto (F^{*},f^{*})\\
& J: (F,f)\mapsto (t,f(\sigma_{0}\vee \sigma_{0}\vee \id_{U}) && K: (F,f)\mapsto (t,f)
\end{align*}
along with natural transformations:
$$
\id_{\mathcal{D}_{T}}\leftarrow H\rightarrow I\leftarrow J\rightarrow K
$$
which induce homotopies between simplicial maps after applying the nerve. Hence, it follows that $N(K)$ is a weak homotopy equivalence. Now note that $G$ can be factored as $\mathcal{D}_{T}\xrightarrow{\alpha} \nec/T\xrightarrow{\beta} \mathcal{D}_{T}$ where $\alpha: (F,f)\mapsto f$ and $\beta: f\mapsto (t,f)$. Since clearly $\alpha\beta = \id_{\nec/T}$, it follows from the 2-out-of-6-property that $\alpha$ and $\beta$ are weak homotopy equivalence between $\mathcal{D}_{T}$ and $\nec/T$. Now $\nec/T$ is clearly aspherical and thus so is $\mathcal{D}_{T}$.
\end{proof}

\printbibliography

@article {cisinski2006prefaisceaux,
    AUTHOR = {Cisinski, Denis-Charles},
     TITLE = {Les pr\'{e}faisceaux comme mod\`eles des types d'homotopie},
   JOURNAL = {Ast\'{e}risque},
  FJOURNAL = {Ast\'{e}risque},
    NUMBER = {308},
      YEAR = {2006},
     PAGES = {xxiv+390},
      ISSN = {0303-1179,2492-5926},
      ISBN = {978-2-85629-225-9},
   MRCLASS = {55-02 (18F20 18G50 55P60 55U35)},
  MRNUMBER = {2294028},
MRREVIEWER = {Philippe\ Gaucher},
}

@article {dugger2011rigidification,
    AUTHOR = {Dugger, Daniel and Spivak, David I.},
     TITLE = {Rigidification of quasi-categories},
   JOURNAL = {Algebr. Geom. Topol.},
  FJOURNAL = {Algebraic \& Geometric Topology},
    VOLUME = {11},
      YEAR = {2011},
    NUMBER = {1},
     PAGES = {225--261},
   MRCLASS = {55U40 (18G30)},
  MRNUMBER = {2764042},
MRREVIEWER = {G\'{e}rald Gaudens},
       %DOI = {10.2140/agt.2011.11.225},
}

@article {jardine2006categorical,
    AUTHOR = {Jardine, J. F.},
     TITLE = {Categorical homotopy theory},
   JOURNAL = {Homology Homotopy Appl.},
  FJOURNAL = {Homology, Homotopy and Applications},
    VOLUME = {8},
      YEAR = {2006},
    NUMBER = {1},
     PAGES = {71--144},
      ISSN = {1532-0073,1532-0081},
   MRCLASS = {55P60 (14F35 18F20 55U35)},
  MRNUMBER = {2205215},
MRREVIEWER = {Richard\ John\ Steiner},
       %URL = {http://projecteuclid.org/euclid.hha/1140012467},
}

@article{mertens2026nerves,
author = {Mertens, Arne},
title = {Nerves of enriched categories via necklaces},
journal = {Journal of the London Mathematical Society},
volume = {113},
number = {6},
pages = {e70573},
%doi = {https://doi.org/10.1112/jlms.70573},
%url = {https://londmathsoc.onlinelibrary.wiley.com/doi/abs/10.1112/jlms.70573},
eprint = {https://londmathsoc.onlinelibrary.wiley.com/doi/pdf/10.1112/jlms.70573},
year = {2026}
}

\end{document}